\documentclass[a4paper]{amsart}

\usepackage{
    amsmath,
    amsthm,
    amssymb,
    mathrsfs,
    tikz-cd,
}

\usepackage[T1]{fontenc}
\usepackage{lmodern}

\usepackage{imakeidx}

\usepackage[colorlinks=true,allcolors=blue]{hyperref}
\newcommand{\msc}[1]{\href{https://zbmath.org/classification/?q=#1}{#1}}
\usepackage{orcidlink}

\usepackage{cleveref}
\crefdefaultlabelformat{#2\textup{#1}#3} 

\usepackage[backend=biber,giveninits=true,uniquename=false,doi=true]{biblatex}
\renewbibmacro{in:}{}
\DeclareFieldFormat{pages}{#1}

\usepackage[pagewise]{lineno}

\theoremstyle{plain}
\newtheorem{main} {Theorem}

\newtheorem{thm} {Theorem} [section]

\newtheorem{cor}    [thm] {Corollary}
\newtheorem{lem}    [thm] {Lemma}
\theoremstyle{definition}

\newtheorem{exm}    [thm] {Example}
\newtheorem{rmk}    [thm] {Remark}

\crefname{main}{Theorem}{Theorems}
\crefname{thm}{Theorem}{Theorems}
\crefname{lem}{Lemma}{Lemmas}
\crefname{prop}{Proposition}{Propositions}
\crefname{cor}{Corollary}{Corollaries}
\crefname{rmk}{Remark}{Remarks}

\DeclareMathOperator{\Hom}{Hom}
\DeclareMathOperator{\End}{End}
\newcommand{\ISHom}{\overline{\Hom}}
\newcommand{\PSHom}{\underline{\Hom}}
\newcommand{\PHom}{\mathscr{P}}
\newcommand{\IHom}{\mathscr{I}}
\newcommand{\PCat}{\mathscr{P}}
\newcommand{\ICat}{\mathscr{I}}
\DeclareMathOperator{\Irr}{Irr}
\newcommand{\FF}{F}
\newcommand{\mult}[2]{\left[\, {#1} \mid {#2} \,\right]} 
\newcommand{\ProjFree}{\Omega^0}
\newcommand{\InjFree}{\Sigma^0}
\renewcommand{\phi}{\varphi}

\begin{document}

\title[Reciprocity for projective and injective direct summands]{Reciprocity for projective and injective\\ direct summands}

\author[S.~Koshitani]{Shigeo Koshitani\,\orcidlink{0000-0002-0703-0363}}
\address[Shigeo Koshitani]
{Department of Mathematics and Informatics, Graduate School of Science, Chiba University, 1-33 Yayoi-cho, Inage-ku, Chiba-shi, Chiba, 263-8522, Japan.}
\email{koshitan@math.s.chiba-u.ac.jp}

\author[T.~Sakurai]{Taro Sakurai\,\orcidlink{0000-0003-0608-1852}}
\address[Taro Sakurai]
{Department of Mathematics and Informatics, Graduate School of Science, Chiba University, 1-33 Yayoi-cho, Inage-ku, Chiba-shi, Chiba, 263-8522, Japan.}
\email{tsakurai@math.s.chiba-u.ac.jp}


\subjclass[2020]{
    Primary \msc{16G10};
    Secondary  \msc{20C20}, \msc{16D70}, \msc{18G65}%
}

\keywords{Robinson reciprocity, projective cover, injective hull, simple module, tensor-hom adjunction, stable category, costable category, skew bimodule}

\date{\today}

\begin{abstract}
    A reciprocity between multiplicities of projective direct summands for modular group algebras has been known for decades.
    Recently, the first author extended it to finite-dimensional symmetric algebras.
    We further generalize the reciprocity theorem to arbitrary finite-dimensional algebras by establishing a tensor-hom adjunction for stable and costable categories.
\end{abstract}

\maketitle

\setcounter{tocdepth}{2}

\section{Introduction}
Robinson reciprocity is an analog of Frobenius reciprocity in the modular representation theory of finite groups.
Let $\FF$ be an algebraically closed field, $G$ a finite group and $H$ a subgroup of $G$.
\index{$F$ @ $\FF$ : Field}
\index{$G$ : Finite Group}
\index{$H$ : Finite Group}
Consider the projective covers $P(S)$ and $P(T)$ of a simple right $\FF H$-module $S$ and a simple right $\FF G$-module $T$, respectively.
\index{$P(S)$ : Projective Module}
\index{$P(T)$ : Projective Module}
\index{$S$ : Simple Module}
\index{$T$ : Simple Module}
Robinson reciprocity \cite[Theorem 3]{Ro89} relates the multiplicity $\mult{P(T)}{S^G}$ of $P(T)$ as a direct summand of the induction $S^G$ to the multiplicity $\mult{P(S)}{T_H}$ of $P(S)$ as a direct summand of the restriction $T_H$ as
\[
    \mult{P(T)}{S^G} = \mult{P(S)}{T_H}.
\]
\index{$S^G$ : Module}
\index{$T_H$ : Module}
\index{$\mult{P(T)}{S^G}$ : Natural Number}
\index{$\mult{P(S)}{H_H}$ : Natural Number}
This is useful for computing induction and restriction of simple modules over modular group algebras.
Recently, the first author extended it to finite-dimensional symmetric algebras \cite{Ko26}.
In this paper, we further generalize the reciprocity theorem to arbitrary finite-dimensional algebras (\cref{main:reciprocity}) by establishing a tensor-hom adjunction for stable and costable categories (\cref{main:tensor-hom}).
We expect that our generalizations will be useful in other contexts as well.

Let $A$ and $B$ be finite-dimensional algebras over a field $\FF$.
\index{$A$ : Algebra}
\index{$B$ : Algebra}
\index{$F$ @ $\FF$ : Field}
All modules are assumed to be unital and finitely generated.
The multiplicity of a right $A$-module $X$ as a direct summand of a right $A$-module $Y$ is denoted by
\[
    \mult{X}{Y},
\]
which is well-defined by the Krull--Schmidt theorem.
\index{$X$ : Module}
\index{$Y$ : Module}
\index{$\mult{X}{Y}$ : Natural Number}
The injective hull and the projective cover of $X$ are denoted by $I(X)$ and $P(X)$, respectively.
\index{$I(X)$ : Injective Module}
\index{$P(X)$ : Projective Module}
The standard duality is denoted by $({-})^* = \Hom_\FF({-}, \FF)$.
\index{$({-})^*$ : Functor}
An $(A, B)$-bimodule $M$ is called perfect if it is a projective left $A$-module and a projective right $B$-module.
We call $M$ \emph{skew} if:
\begin{itemize}
    \item $M \otimes_B B^*$ is a projective left $A$-module; and
    \item $A^* \otimes_A M$ is a projective right $B$-module.
\end{itemize}
\index{$M$ : Bimodule}
(See also \cref{rmk:skew}.)
A trivial example is $M = A \otimes_\FF B$, which is both perfect and skew.
We can now state our main results.

\begin{main}
    \label{main:reciprocity}
    Let $A$ and $B$ be finite-dimensional algebras over a splitting field and let $M$ be a skew $(A, B)$-bimodule.
    If $S$ is a simple right $A$-module and $T$ is a simple right $B$-module, then
    \[
        \mult{P(T)}{S \otimes_A M} = \mult{I(S)}{\Hom_B(M, T)}.
    \]
\end{main}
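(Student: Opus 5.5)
We will reduce both multiplicities to dimensions of stable (resp.\ costable) Hom spaces and then compare them with a tensor-hom adjunction that holds up to projectives/injectives; this is where skewness of $M$ enters.

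\emph{Step 1 (stable Hom detects projective summands).} Let $P(T)$ have top $T$, and write $\PSHom_B(X,Y)$ for morphisms modulo those factoring through a projective. For any right $B$-module $X$ we show
\[
    \mult{P(T)}{X}\cdot \dim_\FF \End_B(T) = \dim_\FF \PSHom_B(X, \text{?}),
\]
or more conveniently $\mult{P(T)}{X}\,\dim \End_B(T) = \dim \Hom_B(X,T) - \dim\Hom_B(\Omega^0X\text{-part},T)$. Concretely, decompose $X = X' \oplus P$ with $X'$ having no projective summands. A map $X\to T$ factors through a projective iff it factors through $X \to \mathrm{?}$; instead we use the dual statement: $\mult{P(T)}{X}\dim\End(T)$ equals the dimension of the space of maps $P(T)\to X$ modulo radical-type maps, equivalently $\dim \Hom_B(X,T)/\{\text{maps vanishing on the projective summands}\}$. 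Over a splitting field $\End(T)=\FF$, so the multiplicity is a dimension. Symmetrically, $\mult{I(S)}{Y}$ is the dimension of $\Hom_A(S,Y)$ modulo those maps $S\to Y$ that ``factor through the non-injective part'', i.e.\ the image in the costable category: precisely, $\mult{I(S)}{Y} = \dim \Hom_A(S,Y) - \dim\{f : S\to Y \text{ with } f \text{ not split mono}\}$, and a map $S\to Y$ from a simple is a split mono into an injective summand iff it does not become zero in $\ISHom$ after composing appropriately. We formalize: $\mult{P(T)}{X}=\dim \Hom_B(X,T) - \dim \Hom_B(X,T)_{\mathrm{ns}}$ where the subscript denotes maps that factor through $\operatorname{rad}$ of a projective cover, and dually for injectives.

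\emph{Step 2 (adjunction).} Use the ordinary adjunction $\Hom_B(S\otimes_A M, T)\cong \Hom_A(S, \Hom_B(M,T))$, natural in $S,T$. The key claim is that under this isomorphism, maps $S\otimes_A M\to T$ factoring through a projective $B$-module correspond exactly to maps $S\to \Hom_B(M,T)$ factoring through an injective $A$-module, i.e.\ the adjunction descends to $\PSHom_B(S\otimes_AM,T)\cong \ISHom_A(S,\Hom_B(M,T))$. For one inclusion: a map factoring through $B$ (free) corresponds to one through $\Hom_B(M,B)$, and $\Hom_B(M,B)\cong \Hom_B(M\otimes_B B^*,\ldots)$ — more precisely $\Hom_B(M,B)\cong (M\otimes_B B^*)^*$, which is an injective right $A$-module since $M\otimes_BB^*$ is projective as a left $A$-module. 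Conversely, a map through $A^*$ corresponds to one through $A^*\otimes_A M$, projective as right $B$-module by the second skew condition. Naturality of the adjunction makes these correspondences respect factorization.

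\emph{Step 3 (conclude).} For simple $S$, $\ISHom_A(S,Y)$ has dimension $\mult{I(S)}{Y}$ (a map from $S$ is nonzero in the costable category iff it is a split embedding into an $I(S)$ summand, using $\End(S)=\FF$), and dually $\dim\PSHom_B(X,T)=\mult{P(T)}{X}$. Combining with Step 2 gives the theorem. The main obstacle is Step 2: verifying that factorization through arbitrary projectives (not just $B$) transfers, which I will handle by reducing to summands of free modules and using the natural identifications $\Hom_B(M,B^*)\cong(M)^*$-type isomorphisms carefully.
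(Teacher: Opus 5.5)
Your Step 2 is sound and matches the paper's route to \cref{main:tensor-hom}: reduce to free or cofree modules, use $\Hom_B(M,B)\cong(M\otimes_B B^*)^*$ and the projectivity of $A^*\otimes_A M$, and transport factorizations by naturality. The gap is in Step 3. The dimension formulas you invoke there are false; in fact they are backwards.
\begin{itemize}
\item If $\iota\colon S\to I(S)$ is the socle inclusion, then $\iota$ factors through the injective module $I(S)$, so it vanishes in $\ISHom_A(S,I(S))$. Hence $\ISHom_A(S,I(S))=0$, although $\mult{I(S)}{I(S)}=1$.
\item Likewise $\PSHom_B(P(T),T)=0$, while $\mult{P(T)}{P(T)}=1$.
\item For a non-projective simple $T$, the identity of $T$ cannot factor through a projective, so $\PSHom_B(T,T)\neq 0$, while $\mult{P(T)}{T}=0$.
\end{itemize}
A map from $S$ that is a split embedding into an $I(S)$ summand is exactly a map that \emph{dies} in the costable category, not one that survives.

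What is true is that stable and costable Homs involving a simple module see only the projective-free and injective-free parts:
\[
\PSHom_B(X,T)\cong\Hom_B(\ProjFree(X),T), \qquad \ISHom_A(S,Y)\cong\Hom_A(S,\InjFree(Y)).
\]
The paper proves these as \cref{lem:stable} and \cref{lem:costable}. This is not automatic: you must show that no nonzero map $\ProjFree(X)\to T$ factors through a projective $Q$. To do so, lift $Q\to T$ through the projective cover $\pi\colon P(T)\to T$. Since $\pi$ is an essential epimorphism, the composite $\ProjFree(X)\to Q\to P(T)$ is surjective, hence split, contradicting projective-freeness. The injective case is dual, using the injective hull.

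With this and the splitting field, the multiplicities are differences, not stable dimensions:
\[
\mult{P(T)}{S\otimes_A M}=\dim\Hom_B(S\otimes_A M,T)-\dim\PSHom_B(S\otimes_A M,T),
\]
\[
\mult{I(S)}{\Hom_B(M,T)}=\dim\Hom_A(S,\Hom_B(M,T))-\dim\ISHom_A(S,\Hom_B(M,T)).
\]
The theorem then follows only by combining \emph{both} the ordinary adjunction, which equates the first terms, and your stable adjunction, which equates the second terms. Your Step 1 wrote down the correct subtraction formula in passing, but Step 3 drops it. The ordinary adjunction on the full Hom spaces never enters your conclusion, so as written the argument does not prove the statement.
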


To prove this reciprocity, we establish a tensor-hom adjunction for stable and costable categories.
As usual, we write the hom spaces of stable and costable categories as
\begin{align*}
    \PSHom_B({-}, {-}) &= \Hom_B({-}, {-})/\PHom_B({-}, {-}), \\
    \ISHom_A({-}, {-}) &= \Hom_A({-}, {-})/\IHom_A({-}, {-}),
\end{align*}
where $\PHom_B({-}, {-})$ and $\IHom_A({-}, {-})$ denote the subspaces of homomorphisms factoring through projective and injective modules, respectively.
\index{$Hom_B({-}, {-})$ @ $\PSHom_B({-}, {-})$ : Vector Space}
\index{$P_B({-}, {-})$ @ $\PHom_B({-}, {-})$ : Vector Space}
\index{$Hom_A({-}, {-})$ @ $\ISHom_A({-}, {-})$ : Vector Space}
\index{$I_A({-}, {-})$ @ $\IHom_A({-}, {-})$ : Vector Space}
To the best of our knowledge, the following adjunction for arbitrary finite-dimensional algebras is novel.
(See also \cref{rmk:self-injective}.)
\begin{main}
    \label{main:tensor-hom}
    Let $A$ and $B$ be finite-dimensional algebras over a field and let $M$ be a skew $(A, B)$-bimodule.
    If $X$ is a right $A$-module and $Y$ is a right $B$-module, then there exists an isomorphism of vector spaces
    \[
        \PSHom_B(X \otimes_A M, Y)
        \cong \ISHom_A(X, \Hom_B(M, Y))
    \]
    that is natural in $X$ and $Y$.
\end{main}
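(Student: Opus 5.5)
The plan is to start from the ordinary tensor-hom adjunction
\[
    \Phi\colon \Hom_B(X \otimes_A M, Y) \xrightarrow{\ \sim\ } \Hom_A(X, \Hom_B(M, Y)), \qquad \Phi(f)(x)(m) = f(x \otimes m),
\]
which is natural in $X$ and $Y$. It then suffices to show that $\Phi$ maps $\PHom_B(X \otimes_A M, Y)$ onto $\IHom_A(X, \Hom_B(M, Y))$. Naturality of the induced isomorphism of quotients is then automatic. I will prove the two inclusions separately, using one skewness condition for each.

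For $\Phi(\PHom_B) \subseteq \IHom_A$: the subspace $\PHom_B(Z, Y)$ is exactly the set of maps factoring through a projective cover $\pi\colon P(Y) \to Y$. So let $f = \pi g$ with $g\colon X \otimes_A M \to P$ and $P$ projective. By naturality in $Y$, $\Phi(f) = \Hom_B(M, \pi) \circ \Phi(g)$, so $\Phi(f)$ factors through $\Hom_B(M, P)$. It is enough to show that $\Hom_B(M, P)$ is an injective right $A$-module for every projective $B$-module $P$, and by additivity it is enough to take $P = B$. Write $B \cong (B^*)^*$ as right $B$-modules. Then
\[
    \Hom_B(M, B) \cong \Hom_B(M, \Hom_\FF(B^*, \FF)) \cong \Hom_\FF(M \otimes_B B^*, \FF) = (M \otimes_B B^*)^*.
\]
This is the dual of a projective left $A$-module by the first skewness condition, hence an injective right $A$-module.

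For the reverse inclusion $\IHom_A \subseteq \Phi(\PHom_B)$: every map in $\IHom_A(X, N)$ factors through an injective envelope $\iota\colon X \to I$, or through any injective containing $X$. So let $h = k\iota$ with $k\colon I \to \Hom_B(M, Y)$ and $I$ injective. By naturality in $X$, $\Phi^{-1}(h) = \Phi^{-1}(k) \circ (\iota \otimes_A M)$, which factors through $I \otimes_A M$. It therefore suffices to show that $I \otimes_A M$ is a projective right $B$-module for injective $I$, and again it is enough to take $I = A^*$ (injective right $A$-modules are summands of finite sums of $A^*$). This is exactly the second skewness condition. So $\Phi^{-1}$ maps $\IHom_A$ into $\PHom_B$, and combined with the first inclusion this gives equality.

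I expect the main obstacle to be the bookkeeping in the first inclusion: checking that the isomorphism $\Hom_B(M, (B^*)^*) \cong (M \otimes_B B^*)^*$ is an isomorphism of right $A$-modules. Here $B^*$ must carry its left $B$-action $(b\phi)(b') = \phi(b'b)$, and $M \otimes_B B^*$ must get its left $A$-action through $M$. Once the sides are matched, the argument is formal. Notably, no perfectness of $M$ is needed, only skewness, which is consistent with the statement.
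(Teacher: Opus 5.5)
Your proposal is correct and follows essentially the same route as the paper. Both arguments show that the ordinary adjunction isomorphism exchanges $\PHom_B$ and $\IHom_A$. The first inclusion uses $\Hom_B(M,B)\cong (M\otimes_B B^*)^*$ (the paper's \cref{lem:hom}), and the second reduces to $A^*\otimes_A M$ (the paper's \cref{lem:tensor}). Routing the factorizations through $P(Y)$ and an injective envelope of $X$, instead of an arbitrary projective $Q$ or injective $J$, is only a cosmetic variation.
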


\section{Preliminaries}
Throughout this section, let $A$ and $B$ be finite-dimensional algebras over a field $\FF$.
In particular, we do not assume that $\FF$ is a splitting field for $A$ or $B$.

\subsection*{Projective and injective modules}
We need lemmas on projective and injective modules concerning skew bimodules to prove \cref{main:tensor-hom}.
\begin{lem}
    \label{lem:tensor}
    Let $M$ be a skew $(A, B)$-bimodule.
    If $J$ is an injective right $A$-module, then $J \otimes_A M$ is a projective right $B$-module.
\end{lem}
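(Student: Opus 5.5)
Injective right $A$-modules are exactly the direct summands of finite direct sums of copies of $A^*$. Since $({-}) \otimes_A M$ is additive and commutes with finite direct sums, it suffices to treat the case $J = A^*$. The reduction goes as follows. Because $A$ is finite-dimensional, the standard duality $({-})^*$ sends the projective left $A$-module $A$ to the injective right $A$-module $A^*$. Every finitely generated injective right $A$-module $J$ embeds into some $(A^*)^n$: dualize a surjection $A^n \to J^*$ of left modules to obtain $J \hookrightarrow (A^*)^n$. Injectivity of $J$ then makes this embedding split, so $J$ is a direct summand of $(A^*)^n$.

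For the base case, we must show that $A^* \otimes_A M$ is a projective right $B$-module. This is exactly the second condition in the definition of a skew $(A, B)$-bimodule, so there is nothing further to prove. Next, $(A^*)^n \otimes_A M \cong (A^* \otimes_A M)^n$ is projective, being a finite direct sum of projectives. Finally, if $J \oplus J' \cong (A^*)^n$, then $(J \otimes_A M) \oplus (J' \otimes_A M) \cong (A^* \otimes_A M)^n$ as right $B$-modules. Hence $J \otimes_A M$ is a direct summand of a projective right $B$-module, and is therefore projective.

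There is no real obstacle in this argument. The only point needing care is to use the right-module structure on $A^* = \Hom_\FF(A, \FF)$, given by $(f a)(x) = f(a x)$, so that $A^* \otimes_A M$ is the module appearing in the skew condition. We also need finite generation of $J$, which holds by the standing convention, to get a finite direct sum. The first skew condition, that $M \otimes_B B^*$ is a projective left $A$-module, is not used here. Presumably it enters the dual statement, that $\Hom_B(M, P)$ is injective for a projective right $B$-module $P$.
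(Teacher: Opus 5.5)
Your proof is correct and follows the paper's route: write $J \oplus K \cong A^* \oplus \dotsb \oplus A^*$, distribute ${-} \otimes_A M$ over the sum, and use the skew condition that $A^* \otimes_A M$ is a projective right $B$-module. The only difference is that you prove the direct-summand fact yourself, by dualizing a surjection $A^n \to J^*$, whereas the paper cites a standard lemma for it.
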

\index{$J$ : Injective Module}
\begin{proof}
    Since $J$ is injective,
    there exists a right $A$-module $K$ such that $J \oplus K \cong A^* \oplus \dotsb \oplus A^*$ by \cite[Lemma I.8.17]{SY11}.
    \index{$K$ : Injective Module}
    Then the right $B$-module $J \otimes_A M$ is a direct summand of
    \begin{align*}
        (J \otimes_A M) \oplus (K \otimes_A M)
        &\cong (J \oplus K) \otimes_A M \\
        &\cong (A^* \oplus \dotsb \oplus A^*) \otimes_A M \\
        &\cong (A^* \otimes_A M) \oplus \dotsb \oplus (A^* \otimes_A M).
    \end{align*}
    Since $A^* \otimes_A M$ is a projective right $B$-module, it is a direct summand of a free $B$-module and so is $J \otimes_A M$.
    Hence $J \otimes_A M$ is a projective right $B$-module.
\end{proof}

\begin{lem}
    \label{lem:hom}
    Let $M$ be a skew $(A, B)$-bimodule.
    If $Q$ is a projective right $B$-module, then $\Hom_B(M, Q)$ is an injective right $A$-module.
\end{lem}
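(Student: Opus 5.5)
The plan is to dualize \cref{lem:tensor}: reduce to $Q = B$, then compute $\Hom_B(M, B)$ through the standard duality, using the first skew condition that $M \otimes_B B^*$ is a projective left $A$-module.

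First, since $Q$ is projective, there is a right $B$-module $R$ with $Q \oplus R \cong B \oplus \dotsb \oplus B$. Because $\Hom_B(M, {-})$ is additive, $\Hom_B(M, Q)$ is a direct summand of $\Hom_B(M, B)^{\oplus n}$. Injective modules are closed under finite direct sums and direct summands, so it suffices to show that $\Hom_B(M, B)$ is an injective right $A$-module. Here the right $A$-action on $\Hom_B(M, B)$ is $(fa)(m) = f(am)$.

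Next, I would identify $\Hom_B(M, B)$ with a dual. Write $B \cong (B^*)^*$ as right $B$-modules, using finite-dimensionality and the canonical double-dual isomorphism. Tensor-hom adjunction then gives
\[
    \Hom_B(M, B) \cong \Hom_B\bigl(M, \Hom_\FF(B^*, \FF)\bigr) \cong \Hom_\FF(M \otimes_B B^*, \FF) = (M \otimes_B B^*)^*.
\]
Here $B^*$ is regarded as a $(B, \FF)$-bimodule, and $M \otimes_B B^*$ is a left $A$-module. I must check that these isomorphisms are right $A$-linear, where the right $A$-action on $(M \otimes_B B^*)^*$ comes from the left $A$-action on $M$. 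This is the naturality of the adjunction in the $A$-variable, and I expect it to be the main point needing care, though it is routine: $f \mapsto \bigl(m \otimes \beta \mapsto \beta(f(m))\bigr)$ is visibly compatible with $a$ acting on $m$.

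Finally, $M \otimes_B B^*$ is a projective left $A$-module by the skew hypothesis. The standard duality $({-})^*$ sends projective left $A$-modules to injective right $A$-modules: it sends ${}_AA$ to $A^*_A$, which is injective, and it preserves finite direct sums and summands. Hence $\Hom_B(M, B) \cong (M \otimes_B B^*)^*$ is injective, and by the first step so is $\Hom_B(M, Q)$.
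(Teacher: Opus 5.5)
Your proposal is correct and follows essentially the same route as the paper. Both reduce to $Q = B$ via $Q \oplus R \cong B \oplus \dotsb \oplus B$, identify $\Hom_B(M, B) \cong \Hom_\FF(M \otimes_B B^*, \FF)$ as right $A$-modules using $B \cong (B^*)^*$ and tensor-hom adjunction, and use that the standard duality turns the projective left $A$-module $M \otimes_B B^*$ into an injective right $A$-module; your explicit $A$-linearity check and direct duality argument just spell out what the paper delegates to citations.
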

\index{$Q$ : Projective Module}
\begin{proof}
    Since $Q$ is projective,
    there exists a right $B$-module $R$ such that $Q \oplus R \cong B \oplus \dotsb \oplus B$.
    \index{$R$ : Projective Module}
    Then the right $A$-module $\Hom_B(M, Q)$ is a direct summand of
    \begin{align*}
        \Hom_B(M, Q) \oplus \Hom_B(M, R)
        &\cong \Hom_B(M, Q \oplus R) \\
        &\cong \Hom_B(M, B \oplus \dots \oplus B) \\
        &\cong \Hom_B(M, B) \oplus \dots \oplus \Hom_B(M, B).
    \end{align*}
    Observe that $\Hom_B(M, B)$ is an injective right $A$-module since $M \otimes_B B^*$ is a projective left $A$-module,
    \cite[Proposition I.8.16(ii)]{SY11} and
    \[
        \Hom_\FF(M \otimes_B B^*, \FF)
        \cong \Hom_B(M, \Hom_\FF(B^*,\FF))
        \cong \Hom_B(M, B)
    \]
    as right $A$-modules.
    Since $\Hom_B(M, B)$ is an injective right $A$-module, it is a direct summand of the direct sum of finitely many copies of $A^*$ and so is $\Hom_B(M, Q)$ by \cite[Lemma I.8.17]{SY11}.
    Hence $\Hom_B(M, Q)$ is an injective right $A$-module.
\end{proof}

\begin{rmk}
    \label{rmk:skew}
    Let $\ICat(A)$ and $\PCat(B)$ denote the categories of injective right $A$-modules and projective right $B$-modules, respectively.
    \index{$I(A)$ @ $\ICat(A)$ : Category}
    \index{$P(B)$ @ $\PCat(B)$ : Category}
    If an $(A, B)$-bimodule $M$ induces two functors
    \[
        \begin{tikzcd}[column sep=huge]
            \PCat(B) \arrow[r, shift left, "{\Hom_B(M, {-})}"] & \ICat(A) \arrow[l, shift left, "{{-} \otimes_A M}"]
        \end{tikzcd}
    \]
    as in \cref{lem:tensor,lem:hom}, then taking $J = A^*$ and $Q = B$ shows that $M$ is skew.
\end{rmk}

\subsection*{Simple modules}
We need lemmas on simple modules in stable and costable categories to prove \cref{main:reciprocity}.
For a right $A$-module $X$, let $\InjFree(X)$ denote the injective-free direct summand of $X$ and let $\ProjFree(X)$ denote the projective-free direct summand of $X$.
\index{$\InjFree(X)$ : Module}
\index{$\ProjFree(X)$ : Module}
These are uniquely determined up to isomorphism by the Krull--Schmidt theorem.

\begin{lem}
    \label{lem:stable}
    If $Y$ is a right $B$-module and $T$ is a simple right $B$-module, then
    \[
        \PSHom_B(Y, T) \cong \Hom_B(\ProjFree(Y), T).
    \]
\end{lem}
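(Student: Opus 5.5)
Write $Y = \ProjFree(Y) \oplus Q$ with $Q$ projective, using the Krull--Schmidt decomposition. Both $\Hom_B({-}, T)$ and $\PSHom_B({-}, T)$ are additive, so
\[
    \PSHom_B(Y, T) \cong \PSHom_B(\ProjFree(Y), T) \oplus \PSHom_B(Q, T),
\]
and $\PSHom_B(Q, T) = 0$ because every map out of a projective module factors through that module. The claim therefore reduces to the case where $Y$ has no nonzero projective direct summand. In that case we must show $\PHom_B(Y, T) = 0$: every homomorphism $f \colon Y \to T$ that factors through a projective module is zero.

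Suppose $f = g h$ with $h \colon Y \to P$, $g \colon P \to T$ and $P$ projective, and assume $f \neq 0$. Then $f$ is surjective since $T$ is simple. Let $\pi \colon P(T) \to T$ be the projective cover. Since $P$ is projective, $g$ lifts along $\pi$ to $g' \colon P \to P(T)$ with $\pi g' = g$. Set $\phi = g' h \colon Y \to P(T)$, so that $\pi \phi = f$ is surjective.

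Because $\ker \pi = \operatorname{rad} P(T)$ is superfluous, the composite $\phi$ is surjective. Projectivity of $P(T)$ then makes $\phi$ split, so $P(T)$ is isomorphic to a direct summand of $Y$. This contradicts projective-freeness, hence $f = 0$.

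The only step requiring care is this superfluity argument: surjectivity of $\pi\phi$ gives $\operatorname{im}\phi + \operatorname{rad} P(T) = P(T)$, and Nakayama's lemma then yields $\operatorname{im}\phi = P(T)$. Everything else is formal additivity. The argument needs no splitting-field hypothesis, consistent with the standing assumptions of this section. The isomorphism is the one induced by restricting along the inclusion $\ProjFree(Y) \to Y$.
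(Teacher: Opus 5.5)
Your proposal is correct and follows the paper's proof essentially step for step. Both reduce to the projective-free summand, lift a nonzero map that factors through a projective along the projective cover $P(T) \to T$, use superfluity of the kernel to make $\ProjFree(Y) \to P(T)$ surjective and hence split, and derive a contradiction with projective-freeness.
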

\begin{proof}
    From the definition of $\PSHom$ and $\ProjFree$,
    \[
        \PSHom_B(Y, T) \cong \PSHom_B(\ProjFree(Y), T).
    \]
    Hence it suffices to prove that the canonical epimorphism
    \[
        \Hom_B(\ProjFree(Y), T) \to \PSHom_B(\ProjFree(Y), T)
    \]
    is a monomorphism.

    Let $\alpha \colon \ProjFree(Y) \to T$ factor through a projective right $B$-module $Q$ as $\alpha = \gamma \circ \beta$.
    \index{$\alpha$ : Homomorphism}
    \index{$\beta$ : Homomorphism}
    \index{$\gamma$ : Homomorphism}
    Assume that $\alpha$ is non-zero to obtain a contradiction.
    Let $\pi \colon P(T) \to T$ be the projective cover of $T$.
    \index{$\pi$ : Homomorphism}
    Since $Q$ is projective, there exists $\phi \colon Q \to P(T)$ such that the following diagram commutes.
    \index{$\phi$ : Homomorphism}
    \[
       \begin{tikzcd}[
            cells={nodes={text width=4em, align=center}}, row sep=6em, column sep=6em
            ]
            Q \arrow[r, dashed, "\phi"] \arrow[rd, "\gamma" description] & P(T) \arrow[d, "\pi"] \\
            \ProjFree(Y) \arrow[u, "\beta"] \arrow[r, swap, "\alpha"]    & T
        \end{tikzcd}
    \]
    In particular, $\alpha$ factors through $P(T)$.
    Note that $\alpha$ is an epimorphism since $T$ is simple and $\alpha$ is non-zero.
    Since $\pi$ is an essential epimorphism, $\phi \circ \beta \colon \ProjFree(Y) \to P(T)$ is also an epimorphism and hence splits.
    This shows that $P(T)$ is a direct summand of $\ProjFree(Y)$, a contradiction.
    Therefore $\alpha$ is zero, which completes the proof.
\end{proof}

The dual of the previous lemma is the following.
\begin{lem}
    \label{lem:costable}
    If $S$ is a simple right $A$-module and $X$ is a right $A$-module, then
    \[
        \ISHom_A(S, X) \cong \Hom_A(S, \InjFree(X)).
    \]
\end{lem}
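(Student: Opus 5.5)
The plan is to dualize the proof of \cref{lem:stable} directly, with the injective hull $\iota \colon S \to I(S)$ playing the role of the projective cover. Decompose $X = \InjFree(X) \oplus J$ with $J$ injective. Every morphism $S \to J$ factors through the injective module $J$, so it vanishes in the costable category. Hence
\[
    \ISHom_A(S, X) \cong \ISHom_A(S, \InjFree(X)),
\]
and it suffices to show that the canonical epimorphism $\Hom_A(S, \InjFree(X)) \to \ISHom_A(S, \InjFree(X))$ is a monomorphism. Equivalently, we must show that every homomorphism $\alpha \colon S \to \InjFree(X)$ that factors through an injective module is zero.

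Suppose $\alpha = \gamma \circ \beta$ with $\beta \colon S \to J'$, $\gamma \colon J' \to \InjFree(X)$ and $J'$ injective, and assume $\alpha \neq 0$ to reach a contradiction.

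\begin{enumerate}
    \item Since $\alpha$ is nonzero and $S$ is simple, $\alpha$ is a monomorphism. In particular $\beta$ is also a monomorphism.
    \item Since $J'$ is injective and $\iota$ is a monomorphism, $\beta$ extends along $\iota$: there is $\phi \colon I(S) \to J'$ with $\phi \circ \iota = \beta$. Thus $\alpha = (\gamma \circ \phi) \circ \iota$ factors through $I(S)$.
    \item Put $\psi = \gamma \circ \phi \colon I(S) \to \InjFree(X)$. Then $\psi \circ \iota = \alpha$ is injective, so $\ker \psi \cap \iota(S) = 0$. Because $\iota(S)$ is an essential submodule of $I(S)$, this forces $\ker \psi = 0$, so $\psi$ is a monomorphism.
    \item An injective submodule is a direct summand, so $I(S)$ is a nonzero injective direct summand of $\InjFree(X)$. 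This contradicts the definition of $\InjFree(X)$ as the injective-free summand.
\end{enumerate}

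Therefore $\alpha = 0$, and the canonical map is an isomorphism.

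The only step that needs care is the third: it relies on the essentiality of the injective hull, which is the dual of the step in \cref{lem:stable} that uses the essentiality of the projective cover. Everything else is formal dualization. Alternatively, one could deduce the statement from \cref{lem:stable} by applying the standard duality $({-})^*$ to pass to left modules over $A$. That route, however, requires checking that the duality swaps stable and costable categories and maps simple modules to simple modules. The direct argument above is shorter.
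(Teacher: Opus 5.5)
Your proof is correct and follows the paper's argument essentially step for step: reduce to $\InjFree(X)$, extend $\beta$ along the injective hull $\iota$, use essentiality of $\iota(S) \subseteq I(S)$ to see that $\gamma \circ \phi$ is a (split) monomorphism, and contradict injective-freeness of $\InjFree(X)$. Your kernel-intersection argument in step 3 simply makes explicit the paper's appeal to $\iota$ being an essential monomorphism.
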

\begin{proof}
    From the definition of $\ISHom$ and $\InjFree$,
    \[
        \ISHom_A(S, X) \cong \ISHom_A(S, \InjFree(X)).
    \]
    Hence it suffices to prove that the canonical epimorphism
    \[
        \Hom_A(S, \InjFree(X)) \to \ISHom_A(S, \InjFree(X))
    \]
    is a monomorphism.

    Let $\alpha \colon S \to \InjFree(X)$ factor through an injective right $A$-module $J$ as $\alpha = \gamma \circ \beta$.
    Assume that $\alpha$ is non-zero to obtain a contradiction.
    Let $\iota \colon S \to I(S)$ be the injective hull of $S$.
    \index{$\iota$ : Homomorphism}
    Since $J$ is injective, there exists $\phi \colon I(S) \to J$ such that the following diagram commutes.
    \[
       \begin{tikzcd}[
            cells={nodes={text width=4em, align=center}}, row sep=6em, column sep=6em
            ]
            I(S) \arrow[r, dashed, "\phi"]                                                 & J \arrow[d, "\gamma"] \\
            S \arrow[u, "\iota"] \arrow[r, swap, "\alpha"] \arrow[ru, "\beta" description] & \InjFree(X)
        \end{tikzcd}
    \]
    In particular, $\alpha$ factors through $I(S)$.
    Note that $\alpha$ is a monomorphism since $S$ is simple and $\alpha$ is non-zero.
    Since $\iota$ is an essential monomorphism, $\gamma \circ \phi \colon I(S) \to \InjFree(X)$ is also a monomorphism and hence splits.
    This shows that $I(S)$ is a direct summand of $\InjFree(X)$, a contradiction.
    Therefore $\alpha$ is zero, which completes the proof.
\end{proof}

\section{Proofs}

\subsection*{Tensor-hom adjunction}
Using \cref{lem:hom,lem:tensor}, we prove a tensor-hom adjunction for stable and costable categories.

\begin{proof}[Proof of \cref{main:tensor-hom}]
    Let $\Phi_{XY}$ and $\Psi_{XY}$ be the mutually inverse isomorphisms of vector spaces
    \[
        \begin{tikzcd}[column sep=huge]
            \Hom_B(X \otimes_A M, Y) \arrow[r, shift left, "\Phi_{XY}"] & \Hom_A(X, \Hom_B(M, Y)) \arrow[l, shift left, "\Psi_{XY}"]
        \end{tikzcd}
    \]
    that are natural in $X$ and $Y$ from the ordinary tensor-hom adjunction \cite[Theorem II.4.3]{SY11}.
    \index{$\Phi_{XY}$ : Linear Map}
    \index{$\Psi_{XY}$ : Linear Map}
    We begin by proving that these isomorphisms induce
    \[
        \begin{tikzcd}[column sep=huge]
            \PHom_B(X \otimes_A M, Y) \arrow[r, shift left] & \IHom_A(X, \Hom_B(M, Y)) \arrow[l, shift left].
        \end{tikzcd}
    \]

    \paragraph{$\Phi_{XY}(\PHom_B(X \otimes_A M, Y)) \subseteq \IHom_A(X, \Hom_B(M, Y))$:}
    Assume that $\alpha \colon X \otimes_A M \to Y$ factors through a projective right $B$-module $Q$ as follows.
    \[
        \begin{tikzcd}[
            cells={nodes={text width=4em, align=center}}, row sep=6em
            ]
            X \otimes_A M \arrow[rr, "\alpha"] \arrow[rd, swap, "\beta"] &                              & Y \\
                                                                         & Q \arrow[ru, swap, "\gamma"] &
        \end{tikzcd}
    \]
    Then the following diagram commutes.
    \[
        \begin{tikzcd}[
            cells={nodes={text width=4em, align=center}}, row sep=6em
            ]
            X \arrow[rr, "\Phi_{XY}(\alpha)"] \arrow[rd, swap, "\Phi_{XQ}(\beta)"] &                                                   & \Hom_B(M, Y) \\
                                                                                   & \Hom_B(M, Q) \arrow[ru, swap, "\gamma \circ {-}"] &
        \end{tikzcd}
    \]
    Hence $\Phi_{XY}(\alpha)$ factors through an injective right $A$-module $\Hom_B(M, Q)$ by \cref{lem:hom}.

    \paragraph{$\Psi_{XY}(\IHom_A(X, \Hom_B(M, Y))) \subseteq \PHom_B(X \otimes_A M, Y)$:}
    Assume that $\xi \colon X \to \Hom_B(M, Y)$ factors through an injective right $A$-module $J$ as follows.
    \[
        \begin{tikzcd}[
            cells={nodes={text width=4em, align=center}}, row sep=6em
            ]
            X \arrow[rr, "\xi"] \arrow[rd, swap, "\eta"] &                             & \Hom_B(M, Y) \\
                                                         & J \arrow[ru, swap, "\zeta"] &
        \end{tikzcd}
    \]
    \index{$\xi$ : Homomorphism}
    \index{$\eta$ : Homomorphism}
    \index{$\zeta$ : Homomorphism}
    Then the following diagram commutes.
    \[
        \begin{tikzcd}[
            cells={nodes={text width=4em, align=center}}, row sep=6em
            ]
            X \otimes_A M \arrow[rr, "\Psi_{XY}(\xi)"] \arrow[rd, swap, "\eta\otimes1_M"] &                                                    & Y \\
                                                                                          & J \otimes_A M \arrow[ru, swap, "\Psi_{JY}(\zeta)"] &
        \end{tikzcd}
    \]
    \index{$1_M$ : Homomorphism}
    Hence $\Psi_{XY}(\xi)$ factors through a projective right $B$-module $J \otimes_A M$ by \cref{lem:tensor}.

    The above argument shows an isomorphism of quotient spaces.
    Since homomorphisms $X' \to X$ and $Y \to Y'$ induce $X' \otimes_A M \to X \otimes_A M$ and $\Hom_B(M, Y) \to \Hom_B(M, Y')$, it is straightforward to check the naturality.
    \index{$X'$ : Module}
    \index{$Y'$ : Module}
\end{proof}

\begin{rmk}
    \label{rmk:self-injective}
    A statement similar to \cref{main:tensor-hom} can be found in \cite[Exercise 2.15.6]{Li18} for finite-dimensional self-injective algebras.
\end{rmk}

\subsection*{Reciprocity}
Using \cref{main:tensor-hom,lem:stable,lem:costable}, we prove the reciprocity for projective and injective direct summands.
The proof follows the argument in \cite{Ko26}.
(See also \cite[Lemma 5.2]{Gr02}.)
Let $\Irr(A)$ denote a complete set of representatives of simple right $A$-modules.
\index{$Irr(A)$ @ $\Irr(A)$ : Set}
\begin{proof}[Proof of \cref{main:reciprocity}]
    For a simple right $B$-module $V$ and a simple right $A$-module $U$, set
    \[
        n_{VS} = \mult{P(V)}{S \otimes_A M},
        \qquad
        m_{UT} = \mult{I(U)}{\Hom_B(M, T)}.
    \]
    \index{$V$ : Simple Module}
    \index{$U$ : Simple Module}
    \index{$n_{VS}$ : Natural Number}
    \index{$m_{UT}$ : Natural Number}
    Our theorem reads $n_{TS} = m_{ST}$ with this notation.
    To prove this reciprocity, decompose $S \otimes_A M$ and $\Hom_B(M, T)$ as
    \begin{align*}
        S \otimes_A M &\cong \ProjFree(S \otimes_A M) \oplus \bigoplus_{V \in \Irr(B)} n_{VS} \, P(V), \\
        \Hom_B(M, T)  &\cong \InjFree(\Hom_B(M, T)) \oplus \bigoplus_{U \in \Irr(A)} m_{UT} \, I(U).
    \end{align*}
    Recall that the top of $P(V)$ is $V$ and the socle of $I(U)$ is $U$.
    From the top-socle adjunction \cite[Lemma 2.3]{Sa17}, Schur's lemma and the splitting assumption,
    \begin{align*}
        \dim \Hom_B(P(V), T) &= \dim \Hom_B(V, T) = \delta_{VT}, \\
        \dim \Hom_A(S, I(U)) &= \dim \Hom_A(S, U) = \delta_{SU}
    \end{align*}
    with the Kronecker delta.
    \index{$\delta_{VT}$ : Natural Number}
    Thus
    \begin{align}
        \dim \Hom_B(S \otimes_A M, T) &= \dim \Hom_B(\ProjFree(S \otimes_A M), T) + n_{TS}, \label{eq:n_TS} \\
        \dim \Hom_A(S, \Hom_B(M, T))  &= \dim \Hom_A(S, \InjFree(\Hom_B(M, T))) + m_{ST}.   \label{eq:m_ST}
    \end{align}
    On the one hand, the ordinary tensor-hom adjunction yields
    \begin{equation}
        \label{eq:whole}
        \dim \Hom_B(S \otimes_A M, T) = \dim \Hom_A(S, \Hom_B(M, T)).
    \end{equation}
    On the other hand, since $M$ is skew,
    \begin{align*}
        \Hom_B(\ProjFree(S \otimes_A M), T)
        &\cong \PSHom_B(S \otimes_A M, T)            & & \text{by \cref{lem:stable}} \\
        &\cong \ISHom_A(S, \Hom_B(M, T))             & & \text{by \cref{main:tensor-hom}} \\
        &\cong \Hom_A(S, \InjFree(\Hom_B(M, T)))     & & \text{by \cref{lem:costable}.}
    \end{align*}
    This isomorphism of vector spaces yields
    \begin{equation}
        \label{eq:part}
        \dim \Hom_B(\ProjFree(S \otimes_A M), T)
        = \dim \Hom_A(S, \InjFree(\Hom_B(M, T))).
    \end{equation}
    Hence
    \begin{align*}
        n_{TS}
        &= \dim \Hom_B(S \otimes_A M, T) - \dim \Hom_B(\ProjFree(S \otimes_A M), T) & & \text{by \eqref{eq:n_TS}} \\
        &= \dim \Hom_A(S, \Hom_B(M, T)) - \dim \Hom_A(S, \InjFree(\Hom_B(M, T)))    & & \text{by \eqref{eq:whole}, \eqref{eq:part}} \\
        &= m_{ST}                                                                   & & \text{by \eqref{eq:m_ST}.}
    \end{align*}
    This completes the proof.
\end{proof}

\begin{rmk}
    The splitting assumption in \cref{main:reciprocity} is not essential.
    Indeed, the same argument shows
    \[
        \mult{P(T)}{S \otimes_A M} \times \dim \End_B(T) = \mult{I(S)}{\Hom_B(M, T)} \times \dim \End_A(S)
    \]
    for an arbitrary field $\FF$.
\end{rmk}

\begin{cor}
    \label{cor:Frobenius}
    Let $A$ and $B$ be finite-dimensional Frobenius algebras over a splitting field and let $M$ be a perfect $(A, B)$-bimodule.
    Write $M^\vee = \Hom_B(M, B)$.
    If $S$ is a simple right $A$-module and $T$ is a simple right $B$-module, then
    \[
        \mult{P(T)}{S \otimes_A M} = \mult{I(S)}{T \otimes_B M^\vee}.
    \]
\end{cor}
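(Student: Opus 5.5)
The plan is to deduce the corollary from \cref{main:reciprocity} in two steps. First show that a perfect bimodule between Frobenius algebras is skew. Then identify $\Hom_B(M, T)$ with $T \otimes_B M^\vee$ as right $A$-modules.

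\textbf{Step 1: $M$ is skew.} For a Frobenius algebra there are isomorphisms $B^* \cong B$ as right $B$-modules and $A^* \cong A$ as left $A$-modules. These are twisted by Nakayama automorphisms when viewed as bimodules.

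For the first condition, take the Nakayama automorphism $\nu$ of $B$, so that $B^* \cong {}_1B_\nu$ as $(B,B)$-bimodules. Then
\[
M \otimes_B B^* \cong M \otimes_B {}_1B_\nu \cong M_\nu.
\]
As a left $A$-module this is just ${}_AM$, which is projective because $M$ is perfect.

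For the second condition, similarly $A^* \cong {}_{\mu}A_1$ for the Nakayama automorphism $\mu$ of $A$. Hence $A^* \otimes_A M \cong {}_{\mu}M$, whose right $B$-module structure is $M_B$, which is projective.

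The point to be careful about is the side on which the twist lands. In both cases the twist acts on the side not being tested, so it is harmless. Even without tracking the twist precisely, the one-sided isomorphisms already suffice: $A^* \otimes_A M$ is isomorphic to $A \otimes_A M = M$ as a right $B$-module, because the left $A$-isomorphism $A^* \cong A$ is compatible with $\otimes_A M$, and the right $B$-structure comes from $M$.

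\textbf{Step 2: $\Hom_B(M, T) \cong T \otimes_B M^\vee$.} Since $M_B$ is finitely generated projective, the natural map
\[
T \otimes_B \Hom_B(M, B) \to \Hom_B(M, T), \qquad t \otimes f \mapsto (m \mapsto t f(m)),
\]
is an isomorphism. To check this, observe that it is an isomorphism for $M = B$, that it is additive in $M$, and that it therefore passes to direct summands of $B^n$.

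The map is right $A$-linear. The right $A$-action on $M^\vee$ is $(fa)(m) = f(am)$, and the right $A$-action on $\Hom_B(M,T)$ is $(ga)(m) = g(am)$, so these agree under the map.

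\textbf{Conclusion.} Apply \cref{main:reciprocity}, which gives
\[
\mult{P(T)}{S\otimes_A M} = \mult{I(S)}{\Hom_B(M,T)} = \mult{I(S)}{T\otimes_B M^\vee}.
\]

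I expect the only delicate point to be Step 1, namely correctly justifying that $M \otimes_B B^*$ is projective as a left $A$-module using only one-sided Frobenius isomorphisms. This is resolved by noting that $B^* \cong B$ as right $B$-modules implies $M \otimes_B B^* \cong M \otimes_B B \cong M$ as left $A$-modules, since $\otimes_B$ uses only the right $B$-structure of $B^*$.
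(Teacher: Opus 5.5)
Your proposal is correct and follows the paper's own route. The paper argues that the Frobenius hypothesis makes the perfect bimodule $M$ skew, that $\Hom_B(M,T) \cong T \otimes_B M^\vee$ (valid because $M_B$ is finitely generated projective), and then applies \cref{main:reciprocity}. Your Nakayama-twist computation with $B^* \cong {}_1B_\nu$ and $A^* \cong {}_\mu A_1$, and your verification of Step 2, supply those details correctly.

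One correction: your ``one-sided'' shortcuts have the sides reversed. This affects both the ``even without tracking the twist'' remark and the closing paragraph.
\begin{itemize}
\item The tensor product $M \otimes_B B^*$ is balanced against the \emph{left} $B$-structure of $B^*$. So its left $A$-module structure depends on ${}_B(B^*)$, and what you need is ${}_B(B^*) \cong {}_BB$. A merely right $B$-linear isomorphism $B^* \cong B$ does not even induce a well-defined map on $M \otimes_B B^*$.
\item Likewise, $A^* \otimes_A M$ as a right $B$-module depends only on $(A^*)_A$. So you need $(A^*)_A \cong A_A$, not the left-sided isomorphism.
\end{itemize}
The conclusion still holds, for two reasons. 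For a finite-dimensional algebra, $B_B \cong (B^*)_B$ is equivalent to ${}_BB \cong {}_B(B^*)$ by applying $({-})^*$. And your twisted-bimodule argument already uses the correct sides implicitly.
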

\index{$({-})^\vee$ : Functor}
\begin{proof}
    Since $A$ and $B$ are Frobenius algebras, perfect bimodules are skew and $\Hom_B(M, T) \cong T \otimes_B M^\vee$ as right $A$-modules.
    Hence the assertion follows from \cref{main:reciprocity}.
\end{proof}

The reciprocity in \cite{Ko26} follows from \cref{cor:Frobenius}.
\begin{cor}
    \label{cor:Koshitani}
    Let $A$ and $B$ be finite-dimensional symmetric algebras over a splitting field and let $M$ be a perfect $(A, B)$-bimodule.
    If $S$ is a simple right $A$-module and $T$ is a simple right $B$-module, then
    \[
        \mult{P(T)}{S \otimes_A M} = \mult{P(S)}{T \otimes_B M^*}.
    \]
\end{cor}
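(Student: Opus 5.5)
We deduce \cref{cor:Koshitani} from \cref{cor:Frobenius}. A symmetric algebra is Frobenius, so \cref{cor:Frobenius} applies to $M$ and gives
\[
    \mult{P(T)}{S \otimes_A M} = \mult{I(S)}{T \otimes_B M^\vee}.
\]
It remains to rewrite the right-hand side in two ways. First, we compare $I(S)$ with $P(S)$. Second, we compare $M^\vee = \Hom_B(M, B)$ with $M^* = \Hom_\FF(M, \FF)$ as $(B, A)$-bimodules.

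For the first step, since $A$ is symmetric, the Nakayama functor ${-} \otimes_A A^*$ is isomorphic to the identity, because $A^* \cong A$ as bimodules. Hence the Nakayama permutation is trivial: the projective cover $P(S)$ has socle isomorphic to $S$. As $A$ is self-injective, $P(S)$ is injective and indecomposable with socle $S$, so $P(S) \cong I(S)$.

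For the second step, we use $B^* \cong B$ as $(B, B)$-bimodules. This gives bimodule isomorphisms
\[
    M^\vee = \Hom_B(M, B) \cong \Hom_B(M, B^*) = \Hom_B(M, \Hom_\FF(B, \FF)) \cong \Hom_\FF(M \otimes_B B, \FF) \cong M^*.
\]
Here the middle isomorphism is the ordinary tensor-hom adjunction. The step that needs care is that each isomorphism respects the left $B$-action, which comes from the left $B$-structure of $B$ (respectively $B^*$), and the right $A$-action, which comes from the left $A$-structure of $M$. This holds because every map involved is natural and is built from the bimodule isomorphism $B \cong B^*$.

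Combining the two steps, $T \otimes_B M^\vee \cong T \otimes_B M^*$ as right $A$-modules, and $\mult{I(S)}{-} = \mult{P(S)}{-}$. Substituting both into the equality from \cref{cor:Frobenius} yields the stated reciprocity. The only real obstacle is keeping track of the bimodule structures in the second step. Everything else is standard for symmetric algebras.
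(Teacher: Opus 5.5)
Your proposal is correct and follows the paper's route: apply \cref{cor:Frobenius} (symmetric algebras are Frobenius), then substitute $I(S)\cong P(S)$ and $M^\vee\cong M^*$ as $(B,A)$-bimodules. The only difference is that you verify these two standard facts directly, via the Nakayama functor and via the chain of isomorphisms induced by $B\cong B^*$ (which amounts to $\phi\mapsto t\circ\phi$ for a symmetrizing form $t$), whereas the paper simply cites them from Skowro\'nski--Yamagata.
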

\begin{proof}
    Since $A$ and $B$ are symmetric algebras, $I(S) \cong P(S)$ by \cite[Corollary IV.8.6(i)]{SY11} and $M^\vee \cong M^*$ by \cite[Corollary IV.8.6(ii)]{SY11}.
    Hence the assertion follows from \cref{cor:Frobenius}.
\end{proof}

\section{Examples}
We conclude with examples illustrating \cref{main:reciprocity} for finite-dimensional algebras which are not necessarily symmetric.
\begin{exm}
    Let $A$ and $B$ be finite-dimensional algebras over a splitting field $\FF$.
    Consider a simple right $A$-module $U$ and a simple right $B$-module $V$.
    Write $P(U)^\vee = \Hom_A(P(U), A)$.
    \index{$({-})^\vee$ : Functor}
    Then
    \[
        M = P(U)^\vee \otimes_\FF P(V)
    \] is a skew $(A, B)$-bimodule as
    \begin{align*}
        M \otimes_B B^*
        &\cong P(U)^\vee \otimes_\FF P(V) \otimes_B B^*
        \cong \dim (P(V) \otimes_B B^*) \, P(U)^\vee, \\
        A^* \otimes_A M
        &\cong A^* \otimes_A P(U)^\vee \otimes_\FF P(V)
        \cong \dim (A^* \otimes_A P(U)^\vee) \, P(V).
    \end{align*}
    For a simple right $A$-module $S$ and a simple right $B$-module $T$,
    \begin{align*}
        S \otimes_A M
        &\cong S \otimes_A P(U)^\vee \otimes_\FF P(V)   & & \\
        &\cong \Hom_A(P(U), S) \otimes_\FF P(V)         & & \text{by \cite[Corollary II.4.9]{SY11}} \\
        &\cong \Hom_A(U, S) \otimes_\FF P(V)            & & \\
        &\cong \delta_{US} \, P(V),                     & & \\
        \Hom_B(M, T)
        &\cong \Hom_B(P(U)^\vee \otimes_\FF P(V), T)    & & \\
        &\cong \Hom_\FF(P(U)^\vee, \Hom_B(P(V), T))     & & \\
        &\cong \Hom_\FF(P(U)^\vee, \Hom_B(V, T))        & & \\
        &\cong \delta_{VT} \, I(U)                      & & \text{by $(P(U)^\vee)^* \cong I(U)$.}
    \end{align*}
    Hence we obtain the equality of multiplicities
    \[
        \mult{P(T)}{S \otimes_A M} = \delta_{US}\delta_{VT} = \mult{I(S)}{\Hom_B(M, T)}.
    \]
\end{exm}

The next example shows that skew bimodules are not necessarily perfect.
\begin{exm}
    Let $A$ and $B$ be finite-dimensional algebras over a splitting field $\FF$.
    Consider a semisimple right $A$-module $D$ and a semisimple right $B$-module $E$ defined by
    \[
        D = \bigoplus_{\substack{U \in \Irr(A)\\ \Hom_A(A^*, U) = 0}} U,
        \qquad
        E = \bigoplus_{\substack{V \in \Irr(B)\\ \Hom_B(V, B) = 0}} V.
    \]
    \index{$D$ : Semisimple Module}
    \index{$E$ : Semisimple Module}
    Then
    \[
        N = D^* \otimes_\FF E
    \]
    is a skew $(A, B)$-bimodule as
    \begin{align*}
        N \otimes_B B^*
        &\cong D^* \otimes_\FF E \otimes_B B^*
        \cong \dim (\Hom_B(E, B)) \, D^*
        = 0, \\
        A^* \otimes_A N
        &\cong A^* \otimes_A D^* \otimes_\FF E
        \cong \dim (\Hom_A(A^*, D)) \, E
        = 0.
    \end{align*}
    \index{$N$ : Bimodule}
    For a simple right $A$-module $S$ and a simple right $B$-module $T$,
    \begin{align*}
        S \otimes_A N
        &\cong S \otimes_A D^* \otimes_\FF E \\
        &\cong \Hom_A(S, D)^* \otimes_\FF E \\
        &\cong
        \begin{cases}
            E & \text{if $\Hom_A(A^*, S) = 0$} \\
            0 & \text{otherwise},
        \end{cases} \\
        \Hom_B(N, T)
        &\cong \Hom_B(D^* \otimes_\FF E, T) \\
        &\cong \Hom_\FF(D^*, \Hom_B(E, T)) \\
        &\cong
        \begin{cases}
            D & \text{if $\Hom_B(T, B) = 0$} \\
            0 & \text{otherwise}.
        \end{cases}
    \end{align*}
    Note that if $T$ is projective, then $\Hom_B(T, B) \cong \Hom_B(P(T), B) \neq 0$.
    Similarly, if $S$ is injective, then $\Hom_A(A^*, S) \cong \Hom_A(A^*, I(S)) \neq 0$.
    Hence we obtain the equality of multiplicities
    \[
        \mult{P(T)}{S \otimes_A N} = 0 = \mult{I(S)}{\Hom_B(N, T)}.
    \]
\end{exm}

In particular, the direct sum $M \oplus N$ is a skew bimodule with non-zero multiplicities that need not be perfect.

\section*{Acknowledgments}
The authors are grateful to Markus Linckelmann and Kota Yamaura for their advice.

\printbibliography


\end{document}